\newtheorem{theorem}{Theorem}[section]
\newtheorem{lemma}[theorem]{Lemma}
\newtheorem{corollary}[theorem]{Corollary}
\theoremstyle{definition}
\theoremstyle{algorithm}
\newtheorem{algorithm}[theorem]{Algorithm}
\theoremstyle{remark}
\newtheorem{remark}[theorem]{Remark}
\numberwithin{equation}{section}
\begin{document}

 
\title[Common solutions to equilibrium problems]{THE COMMON SOLUTIONS TO PSEUDOMONOTONE EQUILIBRIUM PROBLEMS} 
 
\author[D. V. HIEU]{DANG VAN HIEU$^*$}
\address[\textbf{DANG VAN HIEU}]{Department of Mathematics, Ha Noi University of Science, VNU.
334 - Nguyen Trai Street - Ha Noi - Viet Nam}
\email{dv.hieu83@gmail.com}


  \thanks{$^*$Corresponding author}
%
\maketitle
%

\begin{abstract}
In this paper, we propose two iterative methods for finding a common solution of a finite family of equilibrium problems 
for pseudomonotone bifunctions. The first is a parallel hybrid extragradient-cutting algorithm which is extended from the 
previously known one for variational inequalities to equilibrium problems. The second is a new cyclic hybrid 
extragradient-cutting algorithm. In the cyclic algorithm, using the known techniques, we can perform and develop practical numerical experiments.\\
\textbf{Keywords:}  Hybrid method, parallel algorithm, cyclic algorithm, extragradient method, equilibrium problem.  \\
\textbf{MSC(2010):}  Primary: 90C33; Secondary: 68W10, 65K10.
\end{abstract}
\section{\bf Introduction}\label{intro}
Let $C$ be a nonempty closed and convex subset of a real Hilbert space $H$ and $f$ be a bifunction from $H\times H$ to the set of real numbers $\mathbb{R}$. The equilibrium problem (EP) for the bifunction $f$ on $C$ is to find $x^*\in C$ such that
\begin{equation}\label{eq:EP}
f(x^*,y)\ge 0,~\forall y\in C.
\end{equation}
The solution set of the EP $(\ref{eq:EP})$ is denoted by $EP(f)$. The EP is a generalization of many mathematical problems \cite{BO1994,MO1992}. 
In recent years, many algorithms have been proposed for solving the EP, see \cite{A2011,BO1994,DHM2014,M2000,MO1992,QMH2008,TT2007} 
and the references therein. When the bifunction $f$ is monotone, the most of existing algorithms for solving the EP involve the regularization equilibrium 
problem (REP), i.e., at the $n^{th}$ iteration step, known $x_n$, determine the next approximation $x_{n+1}$ as the solution of the problem:
\begin{equation}\label{eq:RugularEP}
\mbox{Find} ~x\in C~ \mbox{such that:}~ f(x,y)+\frac{1}{r_n}\left\langle y-x, x-x_n\right\rangle\ge 0,~\forall y\in C, 
\end{equation}
where $r_n\ge d>0$. Note that the problem $(\ref{eq:RugularEP})$ is strongly monotone when the bifunction $f$ is monotone. 
Thus, its solution exists and is unique under certain assumption of the continuty of the bifunction $f$. Unforturnately, in general, 
for instance $f$ is pseudomonotone, the problem $(\ref{eq:RugularEP})$ is not strongly monotone and so the unique solvability of 
$(\ref{eq:RugularEP})$ is not guaranteed even its solution set can not be convex. In this case, the authors in \cite{A2011,QMH2008} 
replaced the REP $(\ref{eq:RugularEP})$ by two strong convex programs
$$
\left \{
\begin{array}{ll}
y_n=\arg\min\left\{\rho f(x_n,y)+\frac{1}{2}||x_n-y||^2: y\in C\right\},\\
x_{n+1}=\arg\min\left\{\rho f(y_n,y)+\frac{1}{2}||x_n-y||^2: y\in C\right\}, 
\end{array}
\right.
$$
where $\rho>0$ and satisfies some suitable conditions.

Now let $K_i,i=1,\ldots,N$ be a finite family of closed and convex subsets of $H$ such that $K=\cap_{i=1}^N K_i \ne \O$ 
and $f_i:H\times H\to \mathbb{R},i=1,\ldots,N$ be pseudomonotone bifunctions. The problem, so called the common solutions to 
equilibrium problems (CSEP), for the bifunctions $f_i$ is stated as follows: Find $x^*\in K$ such that
\begin{equation}\label{eq:CSEP}
f_i(x^*,y)\ge 0, ~\forall y\in K_i, ~i=1,\ldots,N.
\end{equation}
Clearly, the CSEP with $N=1$ is the EP. The motivation and inspiration for researching the CSEP with $N>1$ are originated from 
some simple observations that if $f_i(x,y)=0$ for all $x,y\in H$ then all inequalities in $(\ref{eq:CSEP})$ are 
automatically satisfied. Thus, the CSEP reduces to the following convex feasibility problem (CFP)
\begin{equation}\label{eq:CFP}
 \mbox{Find}~x^*\in K:=\cap_{i=1}^N K_i \ne \O
\end{equation}
which is to find an element in the intersection of a family of convex sets $\left\{K_i\right\}_{i=1}^N$ in a Hilbert space $H$. The CFP has 
received great attention due to broad applicability in many areas of applied mathematics, most notably, as image recovery from projections, 
computerized tomography, and radiation therapy treatment planing, see for instance \cite{BB1996,C1996}. Besides, if $K_i$ 
is the fixed point set of the mapping $S_i:H\to H$, then the CFP $(\ref{eq:CFP})$ is the common fixed point problem (CFPP), i.e.,
\begin{equation}\label{eq:CFPP}
 \mbox{Find}~x^*\in F:=\cap_{i=1}^N F(S_i) \ne \O,
\end{equation}
where $F(S_i)$ is the fixed point set of $S_i,~i=1,\ldots,N$. Also, if $K_i=H$ and $f_i(x,y)=\left\langle x-S_i x,y-x\right\rangle$ then it is 
easy to show that $x^*$ is a fixed point of $S_i$ if and only if it is a solution of the EP for the bifunction $f_i$ on $K_i$ \cite{BO1994}. 
Thus, the CSEP also becomes the CFPP $(\ref{eq:CFPP})$. Some parallel algorithms for solving the CFPP can be found 
in \cite{AH2014b,AH2014,H2015}.\\
If $f_i(x,y)=\left\langle A_i(x),y-x\right\rangle$, where $A_i:H \to H$ are nonlinear operators, then the 
CSEP becomes the following common solutions of variational inequalities problem (CSVIP): Find $x^*\in K:=\cap_{i=1}^N K_i$ such that
\begin{equation}\label{eq:CSVIP}
 \left\langle A_i(x^*),y-x^*\right\rangle\ge 0, ~\forall y\in K_i, ~i=1,\ldots,N
\end{equation}
 which was announced in \cite{CGRS2012}. Moreover, there are many other mathematical models which are special cases of the 
CSEP such as: common minimizer problems, common saddle point problems, variational inequalities 
over the intersection of closed convex subsets, common solutions of operator equations, 
see \cite{ABH2014,AH2014b,AH2014,BO1994,CGRS2012,H2015} and the references therein. These 
problems have been widely studied over the past decades because of their practical applications to  
image reconstruction, signal processing, biomedical engineering, communication, etc \cite{BB1996,CCCDH2011,C1996,S1987}.

In this paper, we propose two parallel and cyclic extragradient - cutting algorithms for solving the CSEP 
for pseudomonotone bifunctions. The former is extended from a previously known algorithm for variational inequalities 
\cite{CGRS2012} to equilibrium problems. The authors in \cite{CGRS2012} studied the CSVIP for 
Lipschitz continuous and monotone operators. They used the extragradient (or double projection) method 
which was introduced by Korpelevich \cite{K1976} in Euclidean space, and by Nadezhkina and Takahashi \cite{NT2006} in 
Hilbert space to construct iteration sequences. Our first algorithm 
reduces to the CSVIP under a weaker hypothesis that operators need only the pseudomonotonicity. 
The latter is a sequential algorithm which seems to be performed more easily than the first and can delvelop 
practical numerical experiments by using the known techniques of Solodov and Svaiter \cite{SS2000} 
when the number of subproblems $N$ is large. The cyclic algorithm can be considered as an improvement of 
the iterative method in \cite{CGRS2012} and others when the CSEP is reduced to the CSVIP.

The paper is organized as follows: In Section $\ref{pre}$, we collect some definitions and primary results for using in the next section. Section $\ref{main}$ deals with our proposed algorithms and proving the convergence theorems.
\section{\bf Preliminaries}\label{pre}
In this section, we recall some definitions and results for further researches. For solving the CSEP $(\ref{eq:CSEP})$, 
we assume that each bifunction $f_i$ satisfies the following conditions:
\begin{itemize}
\item[(A1).] $f_i$ is pseudomonotone on $H$, i.e., for all $x,y\in H$,
$$ f_i(x, y) \geq 0 \Rightarrow  f_i(y,x) \leq 0; $$
\item [(A2).]  $f_i$ is Lipschitz-type continuous, i.e., there exist two positive constants $c_1,c_2$ such that
$$ f_i(x,y) + f_i(y,z) \geq f_i(x,z) - c_1||x-y||^2 - c_2||y-z||^2, \quad \forall x,y,z \in H;$$
\item [(A3).]   $f_i$ is weakly continuous on $H\times H$;
\item [(A4).]  $f_i(x,.)$ is convex and subdifferentiable on $H$  for every fixed $x\in H.$
\end{itemize}  
Note that the condition $\rm (A2)$ is fulfilled for the bifunction 
$$f(x,y)=\left\langle A(x),y-x\right\rangle,$$
where $A$ is a Lipschitz continuous operator (proved in Corollary $\ref{cor.1}$ below). We have the following result.
\begin{lemma}\label{EPexistence}\cite[Proposition 4.1]{BS1996}
If the bifunction $f$ satisfies the conditions $\rm (A1)-(A4)$, then the solution set $EP(f)$ is closed and convex.
\end{lemma}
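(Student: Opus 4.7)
My plan is to handle closedness directly from the weak continuity assumption (A3), and to establish convexity through a Minty-type detour: first combine pseudomonotonicity (A1) with convexity of $f$ in the second variable (A4) to show that any convex combination $z$ of two solutions satisfies the \emph{dual} inequality $f(y,z)\le 0$ for all $y\in C$, and then recover the primal inequality $f(z,y)\ge 0$ by testing along the line segments from $z$ to each $y\in C$.

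For closedness, I take a sequence $\{x_n\}\subset EP(f)$ with $x_n\to x^*$. Since the underlying $C$ is closed, $x^*\in C$; and for each $y\in C$ the inequalities $f(x_n,y)\ge 0$ pass to the limit by (A3), giving $f(x^*,y)\ge 0$, so $x^*\in EP(f)$.

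For convexity, fix $x_1,x_2\in EP(f)$ and $\lambda\in[0,1]$, and set $z:=\lambda x_1+(1-\lambda)x_2\in C$. Pseudomonotonicity (A1) yields $f(y,x_i)\le 0$ for all $y\in C$ and $i=1,2$, and then (A4) gives
\[
f(y,z)\le \lambda f(y,x_1)+(1-\lambda)f(y,x_2)\le 0,\qquad \forall y\in C.
\]
To turn this into $f(z,y)\ge 0$, I fix $y\in C$ and introduce the test points $y_t:=ty+(1-t)z\in C$ for $t\in (0,1]$, which satisfy $f(y_t,z)\le 0$. Substituting $x=y=z=y_t$ into the Lipschitz-type estimate (A2) forces $f(y_t,y_t)\ge 0$, and then convexity of $f(y_t,\cdot)$ yields
\[
0\le f(y_t,y_t)\le t\,f(y_t,y)+(1-t)f(y_t,z).
\]
Combined with $f(y_t,z)\le 0$, this gives $f(y_t,y)\ge 0$ for every $t\in(0,1]$. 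Since $y_t\to z$ strongly as $t\to 0^+$, (A3) yields $f(z,y)\ge 0$, so $z\in EP(f)$.

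The main obstacle is exactly the bridge from the dual inequality $f(y,z)\le 0$ back to the primal inequality $f(z,y)\ge 0$: pseudomonotonicity runs in only one direction, so the convex-combination test points $y_t$ together with the (essentially free) identity $f(y_t,y_t)\ge 0$ extracted from (A2) are what make the reconstruction work, while the final passage to the limit depends on (A3) in an essential way.
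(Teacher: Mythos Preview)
The paper does not supply its own proof of this lemma; it simply quotes Proposition~4.1 of Bianchi--Schaible. Your argument is correct and is in fact the standard route to the result: closedness from continuity in the first argument, and convexity via the Minty linearization trick (pass to the dual system $f(y,z)\le 0$ using pseudomonotonicity (A1) together with convexity of $f(y,\cdot)$ (A4), then reverse by testing along the segment $y_t=ty+(1-t)z$ and letting $t\to 0^+$ with (A3)). One small remark worth recording: the usual axiomatic framework for equilibrium problems lists $f(x,x)=0$ as a separate hypothesis, which the present paper does not do; your extraction of $f(y_t,y_t)\ge 0$ from the Lipschitz-type estimate (A2) by setting $x=y=z$ is the right workaround here, and together with (A1) it actually forces $f(x,x)=0$ for all $x$, so nothing is lost relative to the classical setting.
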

The metric projection $P_C:H\to C$ is defined by
\begin{equation*}\label{eq:1.3}
P_C x=\arg\min\left\{\left\|y-x\right\|:y\in C\right\}.
\end{equation*}
Since $C$ is nonempty, closed and convex, $P_Cx$ exists and is unique. It is also known that $P_C$ has the following characteristic properties
\begin{lemma}\label{PropertyPC}
Let $P_C:H\to C$ be the metric projection from $H$ onto $C$. Then
\begin{itemize}
\item [$(i)$] $P_C$ is firmly nonexpansive, i.e.,
\begin{equation*}\label{eq:FirmlyNonexpOfPC}
\left\langle P_C x-P_C y,x-y \right\rangle \ge \left\|P_C x-P_C y\right\|^2,~\forall x,y\in H.
\end{equation*}
\item [$(ii)$] For all $x\in C, y\in H$,
\begin{equation}\label{eq:ProperOfPC}
\left\|x-P_C y\right\|^2+\left\|P_C y-y\right\|^2\le \left\|x-y\right\|^2.
\end{equation}
\item [$(iii)$] $z=P_C x$ if and only if 
\begin{equation}\label{eq:EquivalentPC}
\left\langle x-z,z-y \right\rangle \ge 0,\quad \forall y\in C.
\end{equation}
\end{itemize}
\end{lemma}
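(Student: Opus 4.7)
The plan is to establish part (iii) first, since both (i) and (ii) fall out of the variational characterization as easy algebraic corollaries. For (iii), I would argue in the forward direction by the standard first-order optimality argument: if $z = P_C x$, then for any $y \in C$ and $t \in (0,1]$ the convex combination $z + t(y-z)$ lies in $C$, so the scalar function $\varphi(t) = \|z + t(y-z) - x\|^2$ is minimized at $t=0$; expanding gives
\[
\varphi(t) = \|z-x\|^2 + 2t\langle z-x, y-z\rangle + t^2\|y-z\|^2,
\]
and the condition $\varphi'(0) \ge 0$ yields $\langle z-x, y-z\rangle \ge 0$, i.e.\ $\langle x-z, z-y\rangle \ge 0$. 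For the converse, if that inequality holds for every $y \in C$, I would simply expand
\[
\|x-y\|^2 = \|x-z\|^2 + 2\langle x-z, z-y\rangle + \|z-y\|^2 \ge \|x-z\|^2,
\]
so $z$ minimizes $\|\cdot - x\|$ over $C$, and by strict convexity (or uniqueness of the projection noted before the lemma) $z = P_C x$.

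Once (iii) is in hand, part (ii) follows immediately: given $x \in C$ and $y \in H$, set $z = P_C y$. Applying (iii) to the projection of $y$ (so $x$ plays the role of the arbitrary point of $C$) gives $\langle y - z, z - x\rangle \ge 0$, equivalently $\langle x - z, z - y\rangle \ge 0$. Using the same expansion as above,
\[
\|x-y\|^2 = \|x-z\|^2 + 2\langle x-z, z-y\rangle + \|z-y\|^2 \ge \|x-z\|^2 + \|z-y\|^2,
\]
which is exactly inequality \eqref{eq:ProperOfPC}.

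For (i), I would apply the characterization \eqref{eq:EquivalentPC} twice, once to each argument. Taking $x,y \in H$ and using $P_C y \in C$ as the test point in the inequality for $P_C x$ gives $\langle x - P_C x, P_C x - P_C y\rangle \ge 0$; symmetrically, $\langle y - P_C y, P_C y - P_C x\rangle \ge 0$. Adding the two inequalities and collecting terms yields
\[
\langle (x - y) - (P_C x - P_C y),\, P_C x - P_C y\rangle \ge 0,
\]
which rearranges to $\langle x-y, P_C x - P_C y\rangle \ge \|P_C x - P_C y\|^2$, the firm nonexpansiveness.

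No step of this argument poses a real obstacle — the lemma is a textbook compilation. The only minor point to be careful about is sign bookkeeping when translating between the two equivalent forms $\langle x-z, z-y\rangle \ge 0$ and $\langle z-x, y-z\rangle \ge 0$, which must be kept consistent so that (ii) and (i) both drop out cleanly from (iii).
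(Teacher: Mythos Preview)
Your argument is correct and is the standard textbook derivation: establish the variational characterization (iii) via first-order optimality, then deduce (ii) by expanding $\|x-y\|^2$ and (i) by adding two instances of (iii). The paper itself gives no proof of this lemma --- it is stated in the Preliminaries section as a known fact about the metric projection --- so there is nothing to compare against, and your write-up would serve perfectly well as a supplied proof.
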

The normal cone $N_C$ to $C$ at a point $x\in C$ is defined by
$$ N_C(x)=\left\{w\in H:\left\langle w,x-y\right\rangle \ge 0, \forall y\in C\right\}. $$
The following lemma is similarly proved to Theorem 27.4 in \cite{R1970} (also see Theorem 3.1 in \cite{DD2012}) by 
using Moreau-Rockafellar Theorem in \cite{LL1988} to find the subdifferential of a sum of convex function $g$ and indicator function $\delta_C$ to $C$ in a real Hilbert space $H$.
\begin{lemma}\cite[Theorem 27.4]{R1970}\label{lem.Equivalent_MinPro}
Let $C$ be a convex subset of a real Hilbert space H and $g:C\to \mathbb{R}$ be a convex and subdifferentiable function on $C$. Then, 
$x^*$ is a solution to the following convex problem
\begin{equation*}\min\left\{g(x):x\in C\right\}
\end{equation*}
if and only if  ~  $0\in \partial g(x^*)+N_C(x^*)$, where $\partial g(.)$ denotes the subdifferential of $g$ and $N_C(x^*)$ is the normal cone 
of ~ $C$ at $x^*$.
\end{lemma}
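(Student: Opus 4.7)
The plan is to recast the constrained minimization as an unconstrained minimization over all of $H$ by introducing the indicator function $\delta_C$ of the set $C$, defined by $\delta_C(x)=0$ if $x\in C$ and $\delta_C(x)=+\infty$ otherwise. Then $x^*$ is a solution of $\min\{g(x):x\in C\}$ if and only if $x^*$ is a global minimizer of the convex function $h:=g+\delta_C$ on $H$. This reformulation is the standard trick that turns a constrained problem into an unconstrained one at the price of working with an extended-real-valued convex function.

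Next, I would invoke the Fermat rule for convex optimization: a point $x^*$ globally minimizes a proper convex function $h$ on $H$ if and only if $0\in\partial h(x^*)$. This follows immediately from the definition of the subdifferential, since $0\in\partial h(x^*)$ says precisely that $h(y)\ge h(x^*)+\langle 0,y-x^*\rangle = h(x^*)$ for every $y\in H$. Applied to $h=g+\delta_C$, the problem reduces to computing a subdifferential of a sum.

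For the sum rule I would appeal, as the statement suggests, to the Moreau--Rockafellar theorem in the Hilbert-space form of \cite{LL1988}: under an appropriate regularity condition (typically, that one summand be continuous at a point of the effective domain of the other), one has $\partial(g+\delta_C)(x^*)=\partial g(x^*)+\partial \delta_C(x^*)$. Verifying this hypothesis is the most delicate step, but it is mild here: $\delta_C$ is identically $0$ on the convex set $C$ on which $g$ is assumed subdifferentiable (hence, in particular, finite), and continuity/interiority assumptions of this sort are automatic for the indicator of a convex set once one uses the Hilbert-space version in \cite{LL1988}. This is precisely the point at which the argument deviates from the purely finite-dimensional statement in \cite{R1970} and requires the reference to \cite{LL1988,DD2012}.

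Finally, I would identify $\partial \delta_C(x^*)$ with the normal cone $N_C(x^*)$ directly from the definitions. A vector $w$ lies in $\partial \delta_C(x^*)$ iff $\delta_C(y)\ge \delta_C(x^*)+\langle w,y-x^*\rangle$ for all $y\in H$; the inequality is vacuous for $y\notin C$, and for $y\in C$ (with $\delta_C(x^*)=0$) it becomes $\langle w,x^*-y\rangle\ge 0$, which is exactly the defining condition of $N_C(x^*)$. Combining the Fermat rule, the sum rule, and this identification yields the equivalence $x^*$ minimizes $g$ over $C$ iff $0\in\partial g(x^*)+N_C(x^*)$, as required.
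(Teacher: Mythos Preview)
Your proposal is correct and follows exactly the approach the paper indicates: the paper does not give a full proof but explicitly remarks that the lemma ``is similarly proved to Theorem 27.4 in \cite{R1970} \ldots by using Moreau--Rockafellar Theorem in \cite{LL1988} to find the subdifferential of a sum of convex function $g$ and indicator function $\delta_C$,'' which is precisely your reduction via $h=g+\delta_C$, Fermat's rule, the sum rule, and the identification $\partial\delta_C(x^*)=N_C(x^*)$.
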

\section{\bf Main results}\label{main}
In this section, we propose two algorithms for solving the CSEP $(\ref{eq:CSEP})$ and analyse the convergence of 
the iteration sequences generated by the algorithms. In the sequel, without loss of generality, we assume that the 
bifunctions $f_i,i=1,\ldots,N$ are Lipschitz-type continuous with the same positive constants $c_1$ and $c_2$, i.e.,
$$ f_i(x,y) + f_i(y,z) \geq f_i(x,z) - c_1||x-y||^2 - c_2||y-z||^2 $$
for all $x,y,z \in H$. Moreover, the solution set $F=\cap_{i=1}^N EP(f_i)$ is nonempty.
\begin{algorithm}\label{algor1}(The parallel hybrid extragradient-cutting algorithm)\\
\textbf{Initialize.} $x_0\in H, n:=0$, $0<\lambda\le\lambda_k^i\le \mu<\min\left\{\frac{1}{2c_1},\frac{1}{2c_2}\right\}, \gamma_k^i\in [\epsilon,\frac{1}{2}]$ for some $\epsilon \in (0,\frac{1}{2}]$, $k=1,2,\ldots$ and $i=1,\ldots,N$.\\
\textbf{Step 1.} Solve $N$ strongly convex problems in parallel, $i=1,\ldots,N$
$$ y_n^i=\arg\min\left\{\lambda_n^i f_i(x_n,y)+\frac{1}{2}||x_n-y||^2: y\in K_i\right\}. $$
\textbf{Step 2.} Solve $N$ strongly convex problems in parallel, $i=1,\ldots,N$
$$ z_n^i=\arg\min\left\{\lambda_n^i f_i(y_n^i,y)+\frac{1}{2}||x_n-y||^2: y\in K_i\right\}. $$
\textbf{Step 3.} Determine the next approximation $x_{n+1}$ as the projection of $x_0$ onto the intersection $H_n\cap W_n$
$$ x_{n+1}=P_{H_n\cap W_n}(x_0), $$
where $H_n=\cap_{i=1}^N H_n^i$ and 
\begin{eqnarray*}
&&H_n^i=\left\{z\in H:\left\langle x_n-z_n^i, z-x_n -\gamma_n^i(z_n^i-x_n)\right\rangle\le 0\right\},\\
&&W_n=\left\{z\in H: \left\langle x_0-x_n,x_n-z\right\rangle\ge 0\right\}.
\end{eqnarray*}
\textbf{Step 4.} If $x_{n+1}=x_n$ then stop. Otherwise, set $n:=n+1$ and go back \textbf{Step 1.}
\end{algorithm}
In order to prove the convergence of Algorithm $\ref{algor1}$, we need the following lemmas.
\begin{lemma}\label{lem.1}\cite[Lemma 3.1]{A2013} {\rm (cf. \cite[Theorem 3.2]{QMH2008})}
Assume that $x^*\in F$. Let $\left\{y_n^i\right\},\left\{z_n^i\right\}$ be the sequences determined as in Steps 1 and 2 of Algorithm $\ref{algor1}$. Then, there holds the relation
\begin{equation*}\label{eq:}
||z_n^i-x^*||^2\le||x_n-x^*||^2 - \left(1-2\lambda_n^i c_1\right)||y_n^i-x_n||^2-\left(1-2\lambda_n^i c_2\right)||z_n^i-y_n^i||^2.
\end{equation*}
\end{lemma}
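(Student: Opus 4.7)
The plan is to read off optimality conditions for both strongly convex subproblems via Lemma \ref{lem.Equivalent_MinPro}, then combine them with the pseudomonotonicity (A1) and Lipschitz-type (A2) assumptions, and finally expand everything via the standard polarization identity. Fix $i$ and, for readability, abbreviate $\lambda=\lambda_n^i$, $y=y_n^i$, $z=z_n^i$.

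First I apply Moreau--Rockafellar to the Step 1 subproblem. Lemma \ref{lem.Equivalent_MinPro} yields $0\in\partial_2\bigl(\lambda f_i(x_n,\cdot)+\tfrac12\|x_n-\cdot\|^2\bigr)(y)+N_{K_i}(y)$, so there exist $w\in\partial_2 f_i(x_n,y)$ and $\bar w\in N_{K_i}(y)$ with $\lambda w+y-x_n+\bar w=0$. Combining the subgradient inequality for $w$ with the definition of $N_{K_i}$ gives, for every $u\in K_i$,
\begin{equation*}
\lambda\bigl(f_i(x_n,u)-f_i(x_n,y)\bigr)\ge\langle x_n-y,u-y\rangle.
\end{equation*}
The same argument applied to Step 2 yields
\begin{equation*}
\lambda\bigl(f_i(y,u)-f_i(y,z)\bigr)\ge\langle x_n-z,u-z\rangle,\qquad \forall u\in K_i.
\end{equation*}

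Next I plug in two clever choices. Taking $u=z\in K_i$ in the first inequality and $u=x^*\in K\subseteq K_i$ in the second, I get
\begin{align*}
\lambda\bigl(f_i(x_n,z)-f_i(x_n,y)\bigr)&\ge\langle x_n-y,z-y\rangle,\\
\lambda\bigl(f_i(y,x^*)-f_i(y,z)\bigr)&\ge\langle x_n-z,x^*-z\rangle.
\end{align*}
Since $x^*\in EP(f_i)$ gives $f_i(x^*,y)\ge0$, pseudomonotonicity (A1) forces $f_i(y,x^*)\le0$, and the second display simplifies to
\begin{equation*}
\langle x_n-z,z-x^*\rangle\ge\lambda f_i(y,z).
\end{equation*}
The Lipschitz-type condition (A2) with the triple $(x_n,y,z)$ gives
\begin{equation*}
f_i(y,z)\ge f_i(x_n,z)-f_i(x_n,y)-c_1\|x_n-y\|^2-c_2\|y-z\|^2,
\end{equation*}
and combining this with the first display above (which bounds $\lambda\bigl(f_i(x_n,z)-f_i(x_n,y)\bigr)$ from below) yields
\begin{equation*}
\langle x_n-z,z-x^*\rangle\ge\langle x_n-y,z-y\rangle-\lambda c_1\|x_n-y\|^2-\lambda c_2\|y-z\|^2.
\end{equation*}

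Finally I convert inner products to norms by the identities $2\langle x_n-z,z-x^*\rangle=\|x_n-x^*\|^2-\|z-x^*\|^2-\|x_n-z\|^2$ and $2\langle x_n-y,z-y\rangle=\|x_n-y\|^2+\|z-y\|^2-\|x_n-z\|^2$. Multiplying the previous inequality by $2$ and cancelling the $\|x_n-z\|^2$ terms, the desired estimate falls out immediately. The only place where one must be careful is the sign bookkeeping when invoking (A1): it is essential that $x^*$ is a common solution of all the $f_i$'s (so that $x^*\in K_i$ is a legitimate test point in the subgradient inequality) and that the pseudomonotonicity is applied in the order $f_i(x^*,y)\ge0\Rightarrow f_i(y,x^*)\le0$; after that, the Lipschitz-type step and the polarization identity are routine.
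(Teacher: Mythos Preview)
Your proof is correct. The paper does not actually supply a proof of this lemma; it is quoted from \cite[Lemma 3.1]{A2013} (cf.\ \cite[Theorem 3.2]{QMH2008}), so there is no in-paper argument to compare against. Your derivation is the standard one for this estimate: the optimality-condition step you carry out for Step~1 is in fact reproduced verbatim later in the paper (equations \eqref{eq:5}--\eqref{eq:13} in the proof of Theorem~\ref{theo.1}), and the rest---choosing $u=z$ and $u=x^*$, invoking (A1) to kill $f_i(y,x^*)$, using (A2) on the triple $(x_n,y,z)$, and the two polarization identities---is exactly the route taken in the cited references. One tiny remark: you write ``$x^*\in K\subseteq K_i$'', but what you actually need (and have) is $x^*\in F\subseteq EP(f_i)\subseteq K_i$; this also guarantees $f_i(x^*,y)\ge 0$ since $y=y_n^i\in K_i$, which is what legitimizes the application of (A1).
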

\begin{lemma}\label{lem.2}
If Algorithm $\ref{algor1}$ reaches to the iteration step $n$, then $F\subset H_n\cap W_n$ and $x_{n+1}$ is well-defined.
\end{lemma}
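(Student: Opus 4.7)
The plan is to proceed by induction on $n$, establishing the two inclusions $F\subset H_n$ and $F\subset W_n$ separately, and then to note that each $H_n^i$ and $W_n$ is defined by a single linear inequality, so $H_n\cap W_n$ is automatically a closed convex set; the content of the lemma is that it is nonempty, which is exactly what $F\subset H_n\cap W_n$ delivers, and that in turn validates the projection appearing in Step 3.

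The key step is the inclusion $F\subset H_n^i$ for each $i$. Fix $x^*\in F$ and rewrite the half-space using $x^*-x_n-\gamma_n^i(z_n^i-x_n)=x^*-((1-\gamma_n^i)x_n+\gamma_n^i z_n^i)$, so that $x^*\in H_n^i$ is equivalent to
\[
\langle z_n^i-x_n,\,x^*-x_n\rangle\ \ge\ \gamma_n^i\,\|z_n^i-x_n\|^2.
\]
I would verify this by expanding $\|z_n^i-x^*\|^2=\|(z_n^i-x_n)+(x_n-x^*)\|^2$, which gives
\[
2\langle z_n^i-x_n,\,x^*-x_n\rangle\ =\ \|z_n^i-x_n\|^2+\|x_n-x^*\|^2-\|z_n^i-x^*\|^2.
\]
Lemma \ref{lem.1}, together with the hypothesis $\lambda_n^i\le\mu<\min\{1/(2c_1),1/(2c_2)\}$, yields $\|z_n^i-x^*\|\le\|x_n-x^*\|$, so the right-hand side is at least $\|z_n^i-x_n\|^2$. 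Dividing by $2$ and invoking $\gamma_n^i\le 1/2$ then closes the inequality and shows $x^*\in H_n^i$; intersecting over $i$ gives $F\subset H_n$.

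For the inclusion $F\subset W_n$ I would use induction. The base case $n=0$ is trivial because $W_0=H$. For the inductive step, assume $F\subset H_{n-1}\cap W_{n-1}$; since $x_n=P_{H_{n-1}\cap W_{n-1}}(x_0)$, the projection characterization in Lemma \ref{PropertyPC}(iii) gives
\[
\langle x_0-x_n,\,x_n-y\rangle\ \ge\ 0\qquad\text{for every }y\in H_{n-1}\cap W_{n-1},
\]
and in particular for every $y\in F$, which is precisely $F\subset W_n$. Combined with the previous paragraph, $F\subset H_n\cap W_n$, so this set is nonempty closed convex, and $x_{n+1}=P_{H_n\cap W_n}(x_0)$ is well defined.

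The only genuinely nontrivial point is the verification $F\subset H_n^i$: one must see that the cutting half-space has been calibrated so that the Fejér-type contraction of Lemma \ref{lem.1} fits exactly through the factor $\gamma_n^i\le 1/2$. Once that geometric identification is made, the induction step for $W_n$ and the closedness/convexity of $H_n\cap W_n$ are purely formal.
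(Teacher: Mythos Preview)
Your proof is correct and follows essentially the same route as the paper: the paper packages your polarization computation by introducing the auxiliary set $C_n^i=\{z:\|z-z_n^i\|\le\|z-x_n\|\}$, rewrites it as the half-space with coefficient $\tfrac12$, and observes $C_n^i\subset H_n^i$ since $\gamma_n^i\le\tfrac12$, while you do the same algebra directly; the induction for $F\subset W_n$ is identical.
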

\begin{proof}
By Lemma $\ref{EPexistence}$, the solution set $F$ is closed and convex. From the definitions of $H_n^i,W_n,i=1,\ldots,N$, we see that these sets are closed and convex. Thus, $H_n$ is also closed and convex. We now show that $F\subset H_n\cap W_n$ for all $n\ge 0$. For each $i=1,\ldots,N$, putting
$$ C_n^i=\left\{z\in H:||z-z_n^i||\le ||z-x_n||\right\}. $$
A straightforward calculation leads to
$$ C_n^i=\left\{z\in H:\left\langle x_n-z_n^i, z-x_n -\frac{1}{2}(z_n^i-x_n)\right\rangle\le 0\right\}.$$
By $\gamma_n^i\in[\epsilon,\frac{1}{2}]$, $C_n^i\subset H_n^i$ for all $i=1,\ldots,N$. So, $C_n:=\cap_{i=1}^N C_n^i \subset H_n$. From Lemma $\ref{lem.1}$ and $0<\lambda\le\lambda_n^i\le\mu<\min\left\{\frac{1}{2c_1},\frac{1}{2c_2}\right\}$, we obtain $||z_n^i-x^*||\le||x_n-x^*||$ for all $x^*\in F$ and $i=1,\ldots,N$. This implies that $F\subset C_n^i$. Therefore, $F\subset C_n$ for all $n\ge 0$. Next, we show that $F\subset C_n \cap W_n$ for all $n\ge 0$ by the induction. Indeed, we have $F\subset C_0\cap W_0$. Assume that $F\subset C_n \cap W_n$ for some $n\ge 0$. From $x_{n+1}=P_{H_n\cap W_n}(x_0)$ and $(\ref{eq:EquivalentPC})$, we obtain
$$ \left\langle x_0-x_{n+1},x_{n+1}-z\right\rangle\ge 0, ~\forall z\in H_n\cap W_n.$$
Since $F\subset C_n \cap W_n \subset H_n \cap W_n$,
$$ \left\langle x_0-x_{n+1},x_{n+1}-z\right\rangle\ge 0, ~\forall z\in F.$$
This together with the definition of $W_{n+1}$ implies that $F\subset W_{n+1}$, and so $F\subset C_{n+1}\cap W_{n+1}$. Thus, 
by the induction we obtain $F \subset C_n\cap W_n$ for all $n\ge 0$. By $C_n\subset H_n$, we get $F\subset H_n\cap W_n$ for all 
$n\ge 0$. Since $F$ is nonempty, $H_n\cap W_n$ is also nonempty. Therefore, $x_{n+1}$ is well-defined.
\end{proof}
\begin{lemma}\label{lem.3}
If Algorithm $\ref{algor1}$ finishes at the iteration step $n<\infty$, then $x_n\in F$.
\end{lemma}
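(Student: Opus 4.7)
The plan is to read off, step by step from the stopping criterion, that each intermediate vector produced at iteration $n$ collapses onto $x_n$, and then to exploit the first-order optimality of $y_n^i=\arg\min\{\lambda_n^i f_i(x_n,y)+\tfrac12\|x_n-y\|^2:y\in K_i\}$ to conclude $x_n\in EP(f_i)$ for each $i$.

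First I would use $x_{n+1}=x_n$ together with $x_{n+1}=P_{H_n\cap W_n}(x_0)\in H_n=\bigcap_{i=1}^N H_n^i$. Substituting $z=x_n$ into the defining inequality of $H_n^i$ gives
\[
 \langle x_n-z_n^i,\, -\gamma_n^i(z_n^i-x_n)\rangle\le 0,
\]
i.e.\ $\gamma_n^i\|x_n-z_n^i\|^2\le 0$. Since $\gamma_n^i\ge\epsilon>0$, this forces $z_n^i=x_n$ for every $i$. Next I would pick any $x^{*}\in F$ (nonempty by assumption) and apply Lemma~\ref{lem.1}. With $z_n^i=x_n$ the left side equals $\|x_n-x^{*}\|^2$, so the two nonnegative residual terms on the right must vanish. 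Because $\lambda_n^i\le\mu<\min\{1/(2c_1),1/(2c_2)\}$, both coefficients $(1-2\lambda_n^i c_1)$ and $(1-2\lambda_n^i c_2)$ are strictly positive, and I conclude $y_n^i=x_n$ for every $i$ (in particular $x_n\in K_i$).

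Finally I would use Lemma~\ref{lem.Equivalent_MinPro} applied to the convex, subdifferentiable function $g_i(y)=\lambda_n^i f_i(x_n,y)+\tfrac12\|x_n-y\|^2$ on $K_i$. Since $y_n^i=x_n$ is a minimizer, there exist $w_i\in\partial_2 f_i(x_n,x_n)$ and $\bar w_i\in N_{K_i}(x_n)$ with
\[
 \lambda_n^i w_i + (x_n-x_n) + \bar w_i = \lambda_n^i w_i+\bar w_i = 0.
\]
The subdifferential relation (together with the standard identity $f_i(x,x)=0$) yields $f_i(x_n,y)\ge\langle w_i,y-x_n\rangle$ for every $y\in H$, while $\bar w_i=-\lambda_n^i w_i\in N_{K_i}(x_n)$ gives $\langle w_i,y-x_n\rangle\ge 0$ for every $y\in K_i$. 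Combining, $f_i(x_n,y)\ge 0$ for every $y\in K_i$, so $x_n\in EP(f_i)$. This holds for each $i=1,\dots,N$, hence $x_n\in F$.

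The only delicate point is the passage from $y_n^i=x_n$ to the equilibrium inequality: one must translate the argmin through the Moreau--Rockafellar-type Lemma~\ref{lem.Equivalent_MinPro} and make the two multipliers—one from $\partial_2 f_i(x_n,\cdot)$ and one from $N_{K_i}(x_n)$—cancel exactly because the quadratic penalty has zero gradient at the fixed point $x_n$. The rest is bookkeeping on the termination condition and a direct invocation of Lemma~\ref{lem.1}.
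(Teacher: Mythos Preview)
Your proof is correct and follows essentially the same route as the paper: from $x_{n+1}=x_n\in H_n$ deduce $z_n^i=x_n$, invoke Lemma~\ref{lem.1} to get $y_n^i=x_n$, and then conclude $x_n\in EP(f_i)$ from the fact that $x_n$ minimizes $\lambda_n^i f_i(x_n,\cdot)+\tfrac12\|x_n-\cdot\|^2$ over $K_i$. The only difference is that for this last implication the paper simply cites \cite[Proposition~2.1]{M2000}, whereas you spell it out via Lemma~\ref{lem.Equivalent_MinPro}; your version is more self-contained but otherwise identical in spirit.
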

\begin{proof}
Assume that $x_{n+1}=x_n$. Since $x_{n+1}=P_{H_n\cap W_n}(x_0)$, $x_n=x_{n+1}\in H_n$. This together with the definition of $H_n$ implies that $\gamma_n^i||x_n-z_n^i||\le 0$. From the last inequality and $\gamma_n^i\ge \epsilon>0$, one gets $x_n=z_n^i$. By Lemma $\ref{lem.1}$ and the hypothesis of $\lambda_n^i$, we obtain $y_n^i=x_n$. Thus
$$ x_n=\arg\min\left\{\lambda_n^i f_i(x_n,y)+\frac{1}{2}||x_n-y||^2: y\in K_i\right\}. $$
Thus, from \cite[Proposition 2.1]{M2000}, one has $x_n\in EP(f_i)$ for all $i=1,\ldots, N,$ or $x_n \in F$. The proof of Lemma  $\ref{lem.3}$ is complete.
\end{proof}
\begin{lemma}\label{lem.4}
Let $\left\{x_n\right\},\left\{y_n^i\right\},\left\{z_n^i\right\}$ be the sequences generated by Algorithm $\ref{algor1}$. Then, there hold the following relations for all $i=1,\ldots,N$
\begin{equation*}\label{eq:}
\lim_{n\to\infty}||x_{n+1}-x_n||=\lim_{n\to\infty}||y_n^i-x_n||=\lim_{n\to\infty}||z_n^i-x_n||=0.
\end{equation*}
\end{lemma}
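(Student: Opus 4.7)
The plan is to exploit the projection characterization of $W_n$ and the cutting-plane geometry of $H_n^i$ to chain the three quantities together, deriving each successive limit from the previous one.

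First, I would observe that since $\langle x_0-x_n, x_n-x_n\rangle = 0$, we have $x_n \in W_n$, and the defining inequality of $W_n$ together with Lemma~\ref{PropertyPC}(iii) gives $x_n = P_{W_n}(x_0)$. Combined with $F \subset W_n$ from Lemma~\ref{lem.2}, this yields $\|x_n-x_0\| \le \|p-x_0\|$ for any fixed $p \in F$, so $\{x_n\}$ is bounded. Next, since $x_{n+1} \in H_n \cap W_n \subset W_n$, Lemma~\ref{PropertyPC}(ii) applied to the projection $x_n = P_{W_n}(x_0)$ delivers
\begin{equation*}
\|x_{n+1}-x_n\|^2 + \|x_n-x_0\|^2 \le \|x_{n+1}-x_0\|^2,
\end{equation*}
so $\{\|x_n-x_0\|\}$ is nondecreasing and bounded, hence convergent. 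The same inequality then forces $\|x_{n+1}-x_n\| \to 0$.

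Next I would extract $\|z_n^i - x_n\| \to 0$ from the cutting-plane condition. Since $x_{n+1} \in H_n \subset H_n^i$, the defining inequality of $H_n^i$ reads
\begin{equation*}
\langle x_n-z_n^i,\, x_{n+1}-x_n\rangle - \gamma_n^i\langle x_n-z_n^i,\, z_n^i-x_n\rangle \le 0,
\end{equation*}
which rearranges to $\gamma_n^i\|z_n^i-x_n\|^2 \le \langle z_n^i - x_n, x_{n+1}-x_n\rangle \le \|z_n^i-x_n\|\,\|x_{n+1}-x_n\|$. Using $\gamma_n^i \ge \epsilon > 0$ gives $\|z_n^i-x_n\| \le \epsilon^{-1}\|x_{n+1}-x_n\| \to 0$.

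Finally, to obtain $\|y_n^i - x_n\| \to 0$, I would invoke Lemma~\ref{lem.1}: for any fixed $x^* \in F$,
\begin{equation*}
(1-2\lambda_n^i c_1)\|y_n^i-x_n\|^2 \le \|x_n-x^*\|^2 - \|z_n^i-x^*\|^2 \le (\|x_n-x^*\|+\|z_n^i-x^*\|)\,\|x_n-z_n^i\|.
\end{equation*}
The bound $1-2\lambda_n^i c_1 \ge 1-2\mu c_1 > 0$ and the boundedness of $\{x_n\}$ and $\{z_n^i\}$ (the latter following from $\|z_n^i-x^*\| \le \|x_n-x^*\|$) combined with $\|x_n-z_n^i\| \to 0$ then yield $\|y_n^i-x_n\| \to 0$.

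The main conceptual obstacle is the very first step: recognizing that the apparently asymmetric definition of $W_n$ is precisely the characterization $x_n = P_{W_n}(x_0)$, which unlocks the whole chain. Once that observation is made, the remaining estimates are essentially linear in spirit, with the cutting-set $H_n^i$ engineered so that membership of $x_{n+1}$ in $H_n^i$ automatically yields $\|z_n^i-x_n\| = O(\|x_{n+1}-x_n\|)$; no further subtlety is needed beyond the bookkeeping above.
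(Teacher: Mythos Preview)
Your proof is correct and follows essentially the same route as the paper's: identify $x_n=P_{W_n}(x_0)$, use $F\subset W_n$ for boundedness, apply Lemma~\ref{PropertyPC}(ii) to get monotonicity of $\|x_n-x_0\|$ and hence $\|x_{n+1}-x_n\|\to 0$, then read off $\|z_n^i-x_n\|\to 0$ from $x_{n+1}\in H_n^i$ via Cauchy--Schwarz, and finally use Lemma~\ref{lem.1} with the difference-of-squares factorization to obtain $\|y_n^i-x_n\|\to 0$. The only cosmetic difference is that the paper separates the monotonicity estimate $\|x_n-x_0\|\le\|x_{n+1}-x_0\|$ from the squeeze inequality (\ref{eq:2}), whereas you obtain both from a single application of Lemma~\ref{PropertyPC}(ii).
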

\begin{proof}
From the definition of $W_n$ and the relation $(\ref{eq:EquivalentPC})$, we have $x_n=P_{W_n}(x_0)$. For each $u\in F\subset W_n$, 
from $(\ref{eq:ProperOfPC})$, one obtains
\begin{equation}\label{eq:1}
||x_n-x_0||\le ||u-x_0||.
\end{equation}
Thus, the sequence $\left\{||x_n-x_0||\right\}$ is bounded, and so, from Lemma $\ref{lem.1}$ the sequences $\left\{x_n\right\}$ and $\left\{z_n^i\right\}$ are also bounded. Moreover, the projection $x_{n+1}=P_{H_n\cap W_n}(x_0)$ implies $x_{n+1}\in W_n$. Thus, from $x_n=P_{W_n}x_0$ and $(\ref{eq:ProperOfPC})$, we also see that
$$ ||x_n-x_0||\le||x_{n+1}-x_0||. $$
So, the sequence $\left\{||x_n-x_0||\right\}$ is non-decreasing. Hence, there exists the limit of the sequence $\left\{||x_n-x_0||\right\}$. By $x_{n+1}\in W_n$, $x_n=P_{W_n}(x_0)$ and the relation $(\ref{eq:ProperOfPC})$, we also have
\begin{equation}\label{eq:2}
||x_{n+1}-x_n||^2\le ||x_{n+1}-x_0||^2-||x_n-x_0||^2
\end{equation}
Passing to the limit in the inequality $(\ref{eq:2})$ as $n\to\infty$, one gets
\begin{equation}\label{eq:3}
\lim_{n\to\infty}||x_{n+1}-x_n||=0.
\end{equation}
Since $x_{n+1}\in H_n$, $x_{n+1}\in H_n^i$ for all $i=1,\ldots,N$. From the definition of $H_n^i$, we have
$$\gamma_n^i||z_n^i-x_n||^2\le\left\langle x_n-z_n^i,x_n-x_{n+1}\right\rangle.$$
This together with the inequality $|\left\langle x,y\right\rangle|\le ||x||||y||$ implies that $\gamma_n^i||z_n^i-x_n||\le ||x_n-x_{n+1}||$. From $\gamma_n^i\ge \epsilon>0$ and $(\ref{eq:3})$, one has
\begin{equation}\label{eq:4}
\lim_{n\to\infty}||z_{n}^i-x_n||=0, ~i=1,\ldots,N.
\end{equation}
From Lemma $\ref{lem.1}$ and the triangle inequality, we have
\begin{eqnarray*}
\left(1-2\lambda_n^i c_1\right)||y_n^i-x_n||^2&\le&||x_n-x^*||^2-||z_n^i-x^*||^2\\ 
&\le& (||x_n-x^*||+||z_n^i-x^*||)(||x_n-x^*||-||z_n^i-x^*||)\\
&\le& (||x_n-x^*||+||z_n^i-x^*||)||x_n-z_n^i||.
\end{eqnarray*}
The last inequality together with $(\ref{eq:4})$, the hypothesis of $\lambda_n^i$ and the boundedness of $\left\{x_n\right\}$, $\left\{z_n^i\right\}$ implies that
$$
\lim_{n\to\infty}||y_{n}^i-x_n||=0, ~i=1,\ldots,N.
$$
The proof Lemma $\ref{lem.3}$ is complete.
\end{proof}
\begin{theorem}\label{theo.1}
Assume that the bifunctions $f_i,i=1,\ldots,N$ satisfy all conditions $\rm (A1)-(A4)$. In addition the solution set $F$ is nonempty. Then, 
the sequences $\left\{x_n\right\},\left\{y_n^i\right\},\left\{z_n^i\right\}$ generated by Algorithm $\ref{algor1}$ converge strongly to $P_F(x_0)$.
\end{theorem}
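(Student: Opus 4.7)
The plan is to follow the standard hybrid projection method scheme: exploit the characterization $x_n=P_{W_n}(x_0)$ together with $F\subset H_n\cap W_n$ (Lemma 3.2) to derive boundedness and a monotonicity of $\{\|x_n-x_0\|\}$, then use the smallness estimates of Lemma 3.4 to identify every weak cluster point as a member of $F$, and finally upgrade weak to strong convergence to $P_F(x_0)$ via the norm-convergence criterion in Hilbert space.

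First I would note, from the proof of Lemma 3.4, that $\{x_n\}$ is bounded; hence a weak cluster point $p$ exists, along a subsequence $x_{n_k}\rightharpoonup p$. By Lemma 3.4 we also have $y_{n_k}^i\rightharpoonup p$ and $z_{n_k}^i\rightharpoonup p$ for every $i$. Moreover the bounded multipliers $\lambda_{n_k}^i\in[\lambda,\mu]$ admit (after a further subsequence) a limit $\lambda_*^i>0$.

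The main (and only delicate) step is to show $p\in EP(f_i)$ for each $i=1,\dots,N$. I would apply Lemma 2.3 to the strongly convex program defining $z_n^i$: there exist $w\in\partial_2 f_i(y_n^i,z_n^i)$ and $\bar w\in N_{K_i}(z_n^i)$ with $\lambda_n^i w+(z_n^i-x_n)+\bar w=0$. Using the subgradient inequality for $f_i(y_n^i,\cdot)$ at $z_n^i$ and the definition of $N_{K_i}$, for every $y\in K_i$ one obtains
\begin{equation*}
\lambda_n^i\bigl[f_i(y_n^i,y)-f_i(y_n^i,z_n^i)\bigr]\;\ge\;\langle x_n-z_n^i,\,y-z_n^i\rangle.
\end{equation*}
Passing to the limit along $n_k\to\infty$, the right-hand side vanishes because $\|x_n-z_n^i\|\to 0$ and $\{y-z_n^i\}$ is bounded, while the left-hand side tends to $\lambda_*^i f_i(p,y)$ by the weak continuity of $f_i$ on $H\times H$ (condition (A3)) and $f_i(p,p)=0$ (a consequence of (A1) applied with $x=y=p$). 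Since $\lambda_*^i>0$, we deduce $f_i(p,y)\ge 0$ for every $y\in K_i$, hence $p\in EP(f_i)$; as $i$ was arbitrary, $p\in F$.

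To conclude strong convergence, I would use the inequality $\|x_n-x_0\|\le\|u-x_0\|$ valid for every $u\in F$ (derived in Lemma 3.4 from $x_n=P_{W_n}(x_0)$ and $F\subset W_n$). Specialising to $u=P_F(x_0)$ gives $\|x_{n_k}-x_0\|\le\|P_F(x_0)-x_0\|$, whereas weak lower semicontinuity of the norm together with $p\in F$ yields
\begin{equation*}
\|P_F(x_0)-x_0\|\le\|p-x_0\|\le\liminf_{k\to\infty}\|x_{n_k}-x_0\|\le\|P_F(x_0)-x_0\|.
\end{equation*}
By uniqueness of the nearest point in $F$ we get $p=P_F(x_0)$, so every weakly convergent subsequence has the same limit $P_F(x_0)$ and $\|x_{n_k}-x_0\|\to\|P_F(x_0)-x_0\|$. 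The Kadec--Klee property of Hilbert spaces then gives $x_{n_k}\to P_F(x_0)$ strongly; since this holds for every weakly convergent subsequence, the whole sequence $\{x_n\}$ converges strongly to $P_F(x_0)$. Finally, Lemma 3.4 transfers this strong convergence to $\{y_n^i\}$ and $\{z_n^i\}$ via the triangle inequality.
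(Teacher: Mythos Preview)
Your argument is correct and follows essentially the same hybrid--projection scheme as the paper: boundedness via $x_n=P_{W_n}x_0$, identification of weak cluster points in $F$ through the optimality condition and weak continuity (A3), and the weak--to--strong upgrade via Kadec--Klee. The only cosmetic difference is that you derive the key variational inequality from the optimality condition for $z_n^i$ (Step~2) rather than for $y_n^i$ (Step~1) as the paper does; both choices work for the same reason, and note that $f_i(p,p)=0$ actually needs (A2) together with (A1) (from (A2) with $x=y=z$ one gets $f_i(p,p)\ge0$, and then (A1) forces $f_i(p,p)\le0$).
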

\begin{proof}
By Lemmas $\ref{EPexistence}$ and $\ref{lem.2}$, we see that the sets $F,H_n,W_n$ are closed and convex for all $n\ge 0$. Besides, by Lemma $\ref{lem.4}$ the sequence $\left\{x_n\right\}$ is bounded. Assume that $p$ is any weak cluster point of the sequence $\left\{x_n\right\}$. Then, there exists a subsequence of $\left\{x_n\right\}$ converging weakly to $p$. For the sake of simplicity, we denote this subsequence again by $\left\{x_n\right\}$ and $x_n\rightharpoonup p$ as $n\to \infty$. We now show that $p\in F$. Indeed, from the relation
\begin{equation}\label{eq:5}
y_n^i = {\rm argmin} \{\lambda_n^i f_i(x_n, y) +\frac{1}{2}||x_n-y||^2:  y \in K_i\},
\end{equation}
and Lemma $\ref{lem.Equivalent_MinPro}$, one gets
\begin{equation}\label{eq:6}
0\in \partial_2 \left\{\lambda_n^i f_i(x_n,y)+\frac{1}{2}||x_n-y||^2\right\}(y_n^i)+N_{K_i}(y_n^i).
\end{equation}
Thus, there exist $\bar{w}\in N_{K_i}(y_n^i)$ and $w\in \partial_2 f_i (x_n,y_n^i)$  such that
\begin{equation}\label{eq:10}
\lambda_n^i w+x_n-y_n^i+\bar{w}=0.
\end{equation}
From the definition of the normal cone $N_{K_i}(y_n^i)$, we have $\left\langle \bar{w}, y-y_n^i\right\rangle \le 0$ for all $y\in K_i$. Taking into account $(\ref{eq:10})$, we obtain
\begin{equation}\label{eq:11}
\lambda_n^i \left\langle w, y-y_n^i\right\rangle \ge \left\langle y_n^i-x_n, y-y_n^i\right\rangle
\end{equation}
for all $y\in K_i$. Since $w\in \partial_2 f_i (x_n,y_n^i)$, 
\begin{equation}\label{eq:12}
f_i(x_n,y)-f_i(x_n,y_n^i)\ge \left\langle w, y-y_n^i\right\rangle, \forall y\in K_i.
\end{equation}
Combining $(\ref{eq:11})$ and $(\ref{eq:12})$, one has
\begin{equation}\label{eq:13}
\lambda_n^i \left(f_i(x_n,y)-f_i(x_n,y_n^i)\right) \ge \left\langle y_n^i-x_n, y-y_n^i\right\rangle, \forall y\in K_i.
\end{equation}
From $||y_n^i-x_n||\to 0$ and $x_n\rightharpoonup p$, we also have $y_n^i\rightharpoonup p$. Passing to the limit in the inequality $(\ref{eq:13})$ as $n\to\infty$ and employing the assumption (A3) and $\lambda_n^i\ge\lambda>0$, we conclude that $f_i(p,y)\ge 0$ for all $y \in K_i, i=1,\ldots,N$. Hence, $p\in F$. Finally, we show that $x_n\to p$. Putting $x^\dagger=P_F(x_0)$. Using the inequality $(\ref{eq:1})$ with $u=x^\dagger$, we get
$$ ||x_n-x_0||\le ||x^\dagger-x_0||.$$
By the weak lower semicontinuity of the norm $||.||$ and $x_n\rightharpoonup p$, we have
\begin{equation*}
||p-x_0||\le \lim\inf_{n\to\infty}||x_{n}-x_0||\le \lim\sup_{n\to\infty}||x_{n}-x_0||\le||x^\dagger-x_0||.
\end{equation*}
By the definition of $x^\dagger$, $p=x^\dagger$ and so $\lim_{n\to\infty}||x_{n}-x_0||=||x^\dagger-x_0||$. Thus, $\lim_{n\to\infty}||x_{n}||=||x^\dagger||$. By the Kadec-Klee property of the Hilbert space $H$, we have $x_{n}\to x^\dagger=P_Fx_0$ as $n\to\infty$. From Lemma $\ref{lem.4}$, one also obtains that $\left\{y_n^i\right\},\left\{z_n^i\right\}$ converge strongly $P_Fx_0$. This completes the proof of Theorem $\ref{theo.1}$.
\end{proof}
Using Theorem $\ref{theo.1}$, we get the following result which obtained in \cite{CGRS2012}.
\begin{corollary}\label{cor.1}
Let $A_i,i=1,\ldots,N$ be $L$ - Lipschitz continuous and pseudomonotone mappings from a real Hilbert space $H$ to itself. 
In addition, the solution set $\bar{F}=\cap_{i=1}^N VI(A_i,K_i)$ is nonempty, where $VI(A_i,K_i)$ stands for the solution 
set of the variational inequality which is to find $x^*\in K_i$ such that $\left\langle A_i(x^*),y-x^*\right\rangle\ge 0,~\forall y\in K_i$. 
Let $\left\{x_n\right\},\left\{y_n^i\right\},\left\{z^i_n\right\}$ be the sequences generated by the following parallel manner
\begin{equation*}\label{eq:}
\left \{
\begin{array}{ll}
x_0\in H,\\
y_n^i=P_{K_i}(x_n-\lambda_n^iA_i(x_n)),\\ 
z_n^i=P_{K_i}(x_n-\lambda_n^iA_i(y^i_n)),\\
H_n^i=\left\{z\in H:\left\langle x_n-z_n^i, z-x_n -\gamma_n^i(z_n^i-x_n)\right\rangle\le 0\right\},\\
H_n=\cap_{i=1}^N H_n^i,\\
W_n=\left\{z\in H: \left\langle x_0-x_n,x_n-z\right\rangle\ge 0\right\},\\
x_{n+1}=P_{H_n \cap W_n}x_0,n\ge 0,
\end{array}
\right.
\end{equation*}
where $0<\lambda\le\lambda_n^i\le\mu<1/L$, $0<\epsilon\le \gamma_n^i\le 1/2$ for some $\epsilon \in (0,1/2]$. Then, the sequences $\left\{x_n\right\},\left\{y_n^i\right\},\left\{z^i_n\right\}$ converge strongly to $P_{\bar{F}} x_0$.
\end{corollary}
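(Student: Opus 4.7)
The plan is to reduce Corollary 1 to Theorem 1 by choosing $f_i(x,y) := \langle A_i(x), y-x\rangle$, checking that the hypotheses (A1)--(A4) on $f_i$ are inherited from the assumed properties of $A_i$, and verifying that the two convex subproblems in Algorithm 1 collapse to the projection formulas appearing in the corollary. Once these translations are done, Theorem 1 applied to this choice of $f_i$ delivers the claimed strong convergence.

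First I would verify (A1)--(A4). Pseudomonotonicity of $f_i$ transfers verbatim from that of $A_i$, and (A4) is immediate because $f_i(x,\cdot)$ is affine, with $\partial_2 f_i(x,\cdot) = \{A_i(x)\}$. The crucial step is (A2): from the identity
\begin{equation*}
f_i(x,y) + f_i(y,z) - f_i(x,z) = \langle A_i(y) - A_i(x),\, z - y\rangle,
\end{equation*}
Cauchy--Schwarz, the $L$-Lipschitz property of $A_i$, and Young's inequality $ab \le \tfrac12(a^2+b^2)$ give
\begin{equation*}
f_i(x,y) + f_i(y,z) \ge f_i(x,z) - \tfrac{L}{2}\|x-y\|^2 - \tfrac{L}{2}\|y-z\|^2,
\end{equation*}
so (A2) holds with $c_1 = c_2 = L/2$. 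This is exactly the calculation that matches the step-size restriction $\mu < \min\{1/(2c_1),1/(2c_2)\}$ of Theorem 1 with the hypothesis $\mu < 1/L$ of the corollary. Condition (A3) follows from the strong continuity of $A_i$ (by Lipschitzness), which is enough to pass to the limit in the single place where (A3) is invoked in the proof of Theorem 1, namely the inequality $\lambda_n^i(f_i(x_n,y) - f_i(x_n,y_n^i)) \ge \langle y_n^i - x_n, y - y_n^i\rangle$ along a subsequence with $x_n, y_n^i \rightharpoonup p$ and $\|y_n^i - x_n\| \to 0$.

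Second, I would verify the reduction of the subproblems. For fixed $x_n$ the objective $\lambda_n^i \langle A_i(x_n), y - x_n\rangle + \tfrac12\|x_n - y\|^2$ is a strictly convex quadratic in $y$ whose unconstrained minimizer is $x_n - \lambda_n^i A_i(x_n)$; hence its minimizer over $K_i$ is $P_{K_i}(x_n - \lambda_n^i A_i(x_n))$, which is exactly the formula for $y_n^i$ in the corollary, and the analogous computation with $A_i(y_n^i)$ gives $z_n^i = P_{K_i}(x_n - \lambda_n^i A_i(y_n^i))$. The sets $H_n^i$ and $W_n$ are literally the same in both statements, and $x^* \in EP(f_i)$ is by definition $x^* \in VI(A_i,K_i)$, so $F = \bar F$. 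Theorem 1 then yields $x_n, y_n^i, z_n^i \to P_{\bar F}(x_0)$.

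The main obstacle is the identification in (A2) that produces the sharp constants $c_1 = c_2 = L/2$, since this is precisely what aligns the abstract step-size bound of Theorem 1 with the familiar condition $\mu < 1/L$ from the extragradient literature; the remaining work is a direct dictionary between the EP and VI formulations.
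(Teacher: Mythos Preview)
Your proposal is correct and follows essentially the same approach as the paper: define $f_i(x,y)=\langle A_i(x),y-x\rangle$, verify (A1)--(A4) (with the identical computation for (A2) yielding $c_1=c_2=L/2$), reduce the two strongly convex subproblems to the projection formulas, and invoke Theorem~\ref{theo.1}. If anything, you are slightly more careful than the paper about (A3): the paper simply asserts it is ``automatically fulfilled,'' whereas you correctly observe that Lipschitz continuity of $A_i$ suffices for the one limit passage in which (A3) is actually used.
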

\begin{proof}
For each $i=1,\ldots,N$, putting $f_i(x,y)=\left\langle A_i(x),y-x\right\rangle$. Since $A_i$ is pseudomonotone, $f_i$ is too. So the condition $\rm (A1)$ is satisfied for each $f_i$. The conditions $\rm (A3), (A4)$ are automatically fulfilled. We now show that $f_i$ satisfies the condition $\rm (A2)$. Indeed, from the $L$ - Lipschitz continuty of $A_i$, we have
\begin{eqnarray*}
f_i(x,y)+f_i(y,z)-f_i(x,z)&=&\left\langle A_i(x),y-x\right\rangle+\left\langle A_i(y),z-y\right\rangle\\
&&-\left\langle A_i(x),z-x\right\rangle\\ 
&=&\left\langle A_i(x),y-z\right\rangle+\left\langle A_i(y),z-y\right\rangle\\
& =&\left\langle A_i(x)-A_i(y),y-z\right\rangle\\
&\ge&-||A_i(x)-A_i(y)||||y-z||\\
&\ge&-L||x-y||||y-z||\\
&\ge&-\frac{L}{2}||x-y||^2-\frac{L}{2}||y-z||^2.
\end{eqnarray*}
This implies that $f_i$ satisfies the condition $\rm (A2)$ with $c_1=c_2=L/2$. 
From Algorithm $\ref{algor1}$ , we have
\begin{align*}
&y_n^i = {\rm argmin} \{ \lambda_n^i\left\langle A_i(x_n),y-x_n\right\rangle +\frac{1}{2}||x_n-y||^2:  y \in K_i\},\\ 
&z_n^i = {\rm argmin} \{ \lambda_n^i\left\langle A_i(y_n^i),y-y_n^i\right\rangle +\frac{1}{2}||x_n-y||^2:  y \in K_i\}.
\end{align*}
A straightforward computation yields
\begin{align*}
&y_n^i = {\rm argmin} \{\frac{1}{2}||y-(x_n-\lambda_n^i A_i(x_n) )||^2:  y \in K_i\}=P_{K_i}(x_n-\lambda_n^i A_i(x_n) ),\\ 
&z_n^i = {\rm argmin} \{ \frac{1}{2}||y-(x_n-\lambda_n^i A_i(y^i_n) )||^2:  y \in K_i\}=P_{K_i}(x_n-\lambda_n^i A_i(y^i_n)).
\end{align*}
Apply Theorem $\ref{theo.1}$ to Corollary $\ref{cor.1}$, we come to the desired result.
\end{proof}
\begin{remark}
In Corollary $\ref{cor.1}$, we need only the pseudomonotonicity of the mappings $A_i,i=1,\ldots,N$ to obtain the convergence of the iteration sequences. However, in order to get the same result, Censor et al \cite{CGRS2012} required the monotonicity of these mappings which is more strict than the pseudomonotonicity.
\end{remark}
In Algorithm $\ref{algor1}$, at the $n^{th}$ step, in order to determine the next approximation $x_{n+1}$ we have to construct $N+1$ the subsets $H_n^i,i=1,\ldots,N$ and $W_n$ and solve the following optimization problem on the intersection of $N+1$ closed convex sets
\begin{equation*}\label{eq:Optimization}
\left \{
\begin{array}{ll}
\min||z-x_0||^2,\\
\mbox{such that}\quad z\in H_n^1\cap \ldots \cap H_n^N \cap W_n. 
\end{array}
\right.
\end{equation*}
This seems very costly when the number of subproblems $N$ is large. Thus, Algorithm $\ref{algor1}$ can not develop 
practical numerical experiments. To overcome the complexity of this algorithm. We next propose the following cyclic 
algorithm for solving the CSEP for pseudomonotone bifunctions $f_i,i=1,\ldots,N$. We denote $[n]=n (mod~N)+1$ to 
stand for the mod function taking values in $\left\{1,2,\ldots,N\right\}$.
\begin{algorithm}\label{algor2}(The cyclic hybrid extragradient-cutting algorithm)\\
\textbf{Initialize.} $x_0\in H$, n:=0, $0<\lambda\le\lambda_k\le \mu<\min\left\{\frac{1}{2c_1},\frac{1}{2c_2}\right\}, \gamma_k\in [\epsilon,\frac{1}{2}]$ for some $\epsilon \in (0,\frac{1}{2}]$ and $k=1,2,\ldots$.\\
\textbf{Step 1.} Solve the strongly convex problem
$$ y_n=\arg\min\left\{\lambda_nf_{[n]}(x_n,y)+\frac{1}{2}||x_n-y||^2: y\in K_{[n]}\right\}. $$
\textbf{Step 2.} Solve the strongly convex problem 
$$ z_n=\arg\min\left\{\lambda_n f_{[n]}(y_n,y)+\frac{1}{2}||x_n-y||^2: y\in K_{[n]}\right\}. $$
\textbf{Step 3.} Determine the next approximation $x_{n+1}$ as the projection of $x_0$ onto $H_n\cap W_n$
$$ x_{n+1}=P_{H_n\cap W_n}(x_0), $$
where
\begin{eqnarray*}
&&H_n=\left\{z\in H:\left\langle x_n-z_n, z-x_n -\gamma_n(z_n-x_n)\right\rangle\le 0\right\},\\
&&W_n=\left\{z\in H: \left\langle x_0-x_n,x_n-z\right\rangle\ge 0\right\}.
\end{eqnarray*}
\textbf{Step 4.} Set $n:=n+1$ and go back \textbf{Step 1.}
\end{algorithm}
Using the same technique as in \cite[Algorithm 1]{SS2000}, we can find the explicit formula of the projection $x_{n+1}$ of $x_0$ 
onto the intersection of two subsets $H_n$ and $W_n$ in Step 3 of Algorithm $\ref{algor2}$. Indeed, from the definitions 
of $H_n$ and $W_n$, we see that they are either halfspaces or $H$. Putting $v_n=x_n+\gamma_n(z_n-x_n)$, we rewrite the 
set $H_n$ as follows
$$ H_n=\left\{z\in H:\left\langle x_n-z_n, z-v_n\right\rangle\le 0\right\}. $$
By analysing similarly as in \cite[Algorithm 1]{SS2000}, we get the explicit formula of the projection $x_{n+1}$ of $x_0$ onto $H_n\cap W_n$
$$
x_{n+1}:=P_{H_n}x_0 =
\left\{
\begin{array}{ll}
x_0&\mbox{if}\qquad z_n=x_n,\\ 
x_0-\frac{\left\langle x_n-z_n,x_0-v_n\right\rangle}{||x_n-z_n||^2}(x_n-z_n)&\mbox{if}\qquad z_n\ne x_n.\\
\end{array}
\right.
$$
if $P_{H_n}x_0 \in W_n$. Otherwise,
$$
x_{n+1}=x_0+t_1(x_n-z_n)+t_2(x_0-x_n),
$$
where $t_1,t_2$ is the solution of the system of linear equations with two unknowns
$$
\left\{
\begin{array}{ll}
t_1 ||x_n-z_n||^2+t_2\left\langle x_n-z_n,x_0-x_n \right\rangle=-\left\langle x_0-v_n, x_n-z_n\right\rangle,\\
t_1\left\langle x_n-z_n,x_0-x_n \right\rangle+t_2 ||x_0-x_n||^2=-||x_0-x_n||^2.
\end{array}
\right.
$$
\begin{theorem}\label{theo.2}
Assume that the bifunctions $f_i,i=1,\ldots,N$ satisfy all conditions $\rm (A1)-(A4)$. In addition, the solution set $F$ is nonempty. 
Then, the sequences $\left\{x_n\right\},\left\{y_n\right\},\left\{z_n\right\}$ generated by Algorithm $\ref{algor2}$ converge strongly to $P_F(x_0)$.
\end{theorem}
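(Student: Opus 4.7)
The plan is to adapt the proof of Theorem \ref{theo.1} to the cyclic setting, with the main modification being how one extracts the limit equilibrium property from a weakly convergent subsequence, since at each step only a single bifunction $f_{[n]}$ is used rather than all $N$ bifunctions simultaneously.

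First I would establish the analogues of Lemmas \ref{lem.2} and \ref{lem.4} for Algorithm \ref{algor2}. Since $F \subset EP(f_{[n]})$ for every $n$, Lemma \ref{lem.1} (applied with the single index $[n]$) still gives $\|z_n - x^*\| \le \|x_n - x^*\|$ for all $x^* \in F$. Defining $C_n = \{z \in H : \|z-z_n\| \le \|z-x_n\|\}$, the same halfspace computation as before yields $C_n \subset H_n$ (using $\gamma_n \le 1/2$) and $F \subset C_n$. An induction on $n$ using $x_{n+1} = P_{H_n \cap W_n}(x_0)$ and the characterization \eqref{eq:EquivalentPC} of projections then shows $F \subset W_{n+1}$, hence $F \subset H_n \cap W_n$ for every $n$ and $x_{n+1}$ is well-defined. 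From $x_n = P_{W_n}(x_0)$ and $F \subset W_n$ we get $\|x_n - x_0\| \le \|x^\dagger - x_0\|$ where $x^\dagger := P_F(x_0)$, so $\{x_n\}$ is bounded; the monotonicity of $\|x_n - x_0\|$ and the telescoping inequality $\|x_{n+1}-x_n\|^2 \le \|x_{n+1}-x_0\|^2 - \|x_n-x_0\|^2$ (from $x_{n+1} \in W_n$) give $\|x_{n+1} - x_n\| \to 0$. From $x_{n+1} \in H_n$ we extract $\gamma_n \|z_n - x_n\| \le \|x_n - x_{n+1}\|$, hence $\|z_n - x_n\| \to 0$, and Lemma \ref{lem.1} then forces $\|y_n - x_n\| \to 0$.

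The main obstacle is verifying that every weak cluster point $p$ of $\{x_n\}$ lies in every $EP(f_i)$. Let $x_{n_k} \rightharpoonup p$. For each fixed $i \in \{1,\dots,N\}$, choose $\ell_i \in \{0,1,\dots,N-1\}$ with $[n_k + \ell_i] = i$ for infinitely many $k$ (this is possible by the pigeonhole principle applied to $[n_k + 0], \dots, [n_k + N-1]$, which cover $\{1,\dots,N\}$). Passing to a subsequence, the triangle inequality
\begin{equation*}
\|x_{n_k + \ell_i} - p\| \le \sum_{j=0}^{\ell_i - 1} \|x_{n_k + j + 1} - x_{n_k + j}\| + \|x_{n_k} - p\|
\end{equation*}
together with $\|x_{n+1} - x_n\| \to 0$ shows $x_{n_k + \ell_i} \rightharpoonup p$, and similarly $y_{n_k + \ell_i} \rightharpoonup p$. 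Along this shifted subsequence, Step 1 of the algorithm uses exactly the bifunction $f_i$, so Lemma \ref{lem.Equivalent_MinPro} and the subdifferential inequality produce, as in the proof of Theorem \ref{theo.1},
\begin{equation*}
\lambda_{n_k+\ell_i}\bigl(f_i(x_{n_k+\ell_i}, y) - f_i(x_{n_k+\ell_i}, y_{n_k+\ell_i})\bigr) \ge \langle y_{n_k+\ell_i} - x_{n_k+\ell_i}, y - y_{n_k+\ell_i}\rangle
\end{equation*}
for all $y \in K_i$. Letting $k \to \infty$, using $\lambda_n \ge \lambda > 0$, the weak continuity (A3) of $f_i$, and $\|y_n - x_n\| \to 0$, yields $f_i(p, y) \ge 0$ for all $y \in K_i$. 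Since $i$ was arbitrary, $p \in F$.

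Finally, strong convergence follows exactly as in Theorem \ref{theo.1}: from $\|x_n - x_0\| \le \|x^\dagger - x_0\|$ and weak lower semicontinuity of the norm,
\begin{equation*}
\|p - x_0\| \le \liminf_{n\to\infty} \|x_n - x_0\| \le \limsup_{n\to\infty} \|x_n - x_0\| \le \|x^\dagger - x_0\|,
\end{equation*}
so the definition of $x^\dagger = P_F(x_0)$ forces $p = x^\dagger$; uniqueness of the cluster point implies $x_n \rightharpoonup x^\dagger$, and the norm convergence $\|x_n\| \to \|x^\dagger\|$ combined with the Kadec-Klee property of $H$ gives strong convergence $x_n \to x^\dagger$. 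The strong convergence of $\{y_n\}$ and $\{z_n\}$ is then immediate from $\|y_n - x_n\| \to 0$ and $\|z_n - x_n\| \to 0$.
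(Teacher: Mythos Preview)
Your proof follows essentially the same structure as the paper's: first re-establish the analogues of Lemmas \ref{lem.2}--\ref{lem.4} in the cyclic setting, then show every weak cluster point lies in $F$, then conclude strong convergence to $P_F(x_0)$ via Kadec--Klee. The one substantive difference is in how the subsequence with a prescribed index is produced. The paper simply invokes \cite[Theorem 5.3]{BC2001} to assert that for each weak cluster point $p$ and each $i$ there is a subsequence $\{x_{n_j}\}$ with $x_{n_j}\rightharpoonup p$ and $[n_j]=i$; you instead give an explicit, self-contained argument via the pigeonhole principle and a shift by $\ell_i\in\{0,\dots,N-1\}$ steps. Your route is more elementary and avoids the external reference, while the paper's is shorter to state.

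One small slip to fix: the displayed triangle inequality
\[
\|x_{n_k+\ell_i}-p\|\le \sum_{j=0}^{\ell_i-1}\|x_{n_k+j+1}-x_{n_k+j}\|+\|x_{n_k}-p\|
\]
is true but does not by itself yield $x_{n_k+\ell_i}\rightharpoonup p$, since $\|x_{n_k}-p\|$ need not tend to $0$ (you only have weak convergence). What you actually need, and clearly intend, is
\[
\|x_{n_k+\ell_i}-x_{n_k}\|\le \sum_{j=0}^{\ell_i-1}\|x_{n_k+j+1}-x_{n_k+j}\|\to 0,
\]
so that $x_{n_k+\ell_i}=x_{n_k}+(x_{n_k+\ell_i}-x_{n_k})\rightharpoonup p+0=p$. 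With this correction your argument is complete.
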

\begin{proof}
By the same arguments as in the proof of Lemmas $\ref{lem.1}-\ref{lem.4}$, we see that $F, H_n, W_n$ are closed and convex, and $F\subset H_n\cap W_n$ for all $n\ge 0$. Besides, the sequence $\left\{x_n\right\}$ is bounded and there hold the relations
\begin{equation*}\label{eq:14}
\lim_{n\to\infty}||x_{n+1}-x_n||=\lim_{n\to\infty}||y_n-x_n||=\lim_{n\to\infty}||z_n-x_n||=0.
\end{equation*}
Assume that $p$ is any weak cluster point of the sequence $\left\{x_n\right\}$. For each fixed index $i\in \left\{1,2,\ldots,N\right\}$, since the 
set of indexes $i$ is finite, by \cite[Theorem 5.3]{BC2001} there exists a subsequence $\left\{x_{n_j}\right\}$ of $\left\{x_n\right\}$ such that $x_{n_j}\rightharpoonup p$ as $j\to \infty$ and $[n_j]=i$ for all $j$. Repeat the proofs of $(\ref{eq:5})-(\ref{eq:13})$, we also conclude that $p\in EP(f_i)$. This is true for all $i=1,\ldots,N$. Thus, $p\in F$. The rest of the proof of Theorem $\ref{theo.2}$ is same to that one of Theorem \ref{theo.1}. 
\end{proof}
\begin{corollary}\label{cor.2}
Let $A_i,i=1,\ldots,N$ be $L$ - Lipschitz continuous and pseudomonotone mappings from a real Hilbert space $H$ to itself. 
In addition, the solution set $\bar{F}=\cap_{i=1}^N VI(A_i,K_i)$ is nonempty, where $VI(A_i,K_i)$ is defined as in 
Corollary $\ref{cor.1}$. Let $\left\{x_n\right\},\left\{y_n\right\},\left\{z_n\right\}$ be the sequences generated by 
the following cyclic manner
\begin{equation*}\label{eq:}
\left \{
\begin{array}{ll}
x_0\in H,\\
y_n=P_{K_{[n]}}(x_n-\lambda_nA_{[n]}(x_n)),\\ 
z_n=P_{K_{[n]}}(x_n-\lambda_nA_{[n]}(y_n)),\\
H_n=\left\{z\in H:\left\langle x_n-z_n, z-x_n -\gamma_n(z_n-x_n)\right\rangle\le 0\right\},\\
W_n=\left\{z\in H: \left\langle x_0-x_n,x_n-z\right\rangle\ge 0\right\},\\
x_{n+1}=P_{H_n \cap W_n}x_0,
\end{array}
\right.
\end{equation*}
where $0<\lambda\le\lambda_n\le\mu<1/L$, $0<\epsilon\le \gamma_n\le 1/2$ for some $\epsilon \in (0,1/2]$. Then, the sequences $\left\{x_n\right\},\left\{y_n\right\},\left\{z_n\right\}$ converge strongly to $P_{\bar{F}} x_0$.
\end{corollary}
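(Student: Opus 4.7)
The plan is to mirror the proof of Corollary \ref{cor.1} almost verbatim, with the only change being that we appeal to Theorem \ref{theo.2} in place of Theorem \ref{theo.1}. Concretely, for each $i=1,\ldots,N$, I would define the bifunction $f_i(x,y)=\left\langle A_i(x),y-x\right\rangle$ and then verify that each $f_i$ fulfills the standing hypotheses (A1)--(A4) on $H\times H$. Pseudomonotonicity of $A_i$ yields (A1) immediately, (A3) and (A4) are automatic since $f_i(x,\cdot)$ is linear in its second argument and $A_i$ is (in particular) weakly-strongly continuous enough for $\left\langle A_i(x),y-x\right\rangle$ to be weakly continuous in $(x,y)$, and (A2) follows from precisely the chain of inequalities written out in the proof of Corollary \ref{cor.1}, giving the Lipschitz-type constants $c_1=c_2=L/2$. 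The assumption $\mu<1/L$ then matches the bound $\mu<\min\{1/(2c_1),1/(2c_2)\}$ required by Algorithm \ref{algor2}.

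Next I would identify the two strongly convex subproblems in Steps 1 and 2 of Algorithm \ref{algor2} with the projections appearing in the statement of Corollary \ref{cor.2}. Substituting $f_{[n]}(x_n,y)=\left\langle A_{[n]}(x_n),y-x_n\right\rangle$, the map
$$ y\mapsto \lambda_n \left\langle A_{[n]}(x_n),y-x_n\right\rangle+\tfrac{1}{2}\|x_n-y\|^2 $$
differs from $\tfrac{1}{2}\|y-(x_n-\lambda_n A_{[n]}(x_n))\|^2$ only by a constant in $y$, so its minimizer on $K_{[n]}$ is exactly $P_{K_{[n]}}(x_n-\lambda_n A_{[n]}(x_n))$, giving $y_n$ as claimed. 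The analogous computation with $A_{[n]}(y_n)$ in place of $A_{[n]}(x_n)$ delivers $z_n=P_{K_{[n]}}(x_n-\lambda_n A_{[n]}(y_n))$. The sets $H_n$ and $W_n$ appearing in the corollary are literally those in Step 3 of Algorithm \ref{algor2}, and $VI(A_i,K_i)=EP(f_i)$ by the identification $f_i(x,y)=\left\langle A_i(x),y-x\right\rangle$, so $\bar F=F$.

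Since all hypotheses of Theorem \ref{theo.2} are met and the iterates produced by the cyclic scheme in the statement of Corollary \ref{cor.2} coincide with those of Algorithm \ref{algor2} applied to the $f_i$ just constructed, Theorem \ref{theo.2} yields strong convergence of $\{x_n\},\{y_n\},\{z_n\}$ to $P_F(x_0)=P_{\bar F}(x_0)$. No step here is genuinely difficult: the only point requiring care is the verification of (A2), which is exactly the calculation already carried out in Corollary \ref{cor.1} and can be quoted rather than repeated, so the whole argument amounts to a short reduction to Theorem \ref{theo.2}.
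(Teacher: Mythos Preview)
Your proposal is correct and matches the paper's own proof, which consists of the single sentence ``Using Theorem \ref{theo.2} and arguing similarly as in the proof of Corollary \ref{cor.1}, we lead to the desired conclusion.'' You have simply unpacked that sentence: define $f_i(x,y)=\langle A_i(x),y-x\rangle$, verify (A1)--(A4) exactly as in Corollary \ref{cor.1} (with $c_1=c_2=L/2$, so that $\mu<1/L$ matches the step-size condition in Algorithm \ref{algor2}), rewrite the two strongly convex subproblems as the projections $y_n=P_{K_{[n]}}(x_n-\lambda_n A_{[n]}(x_n))$ and $z_n=P_{K_{[n]}}(x_n-\lambda_n A_{[n]}(y_n))$, note that $\bar F=F$, and invoke Theorem \ref{theo.2}.
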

\begin{proof}
Using Theorem $\ref{theo.2}$ and arguing similarly as in the proof of Corollary $\ref{cor.1}$, we lead to the desired conclusion.
\end{proof}
\begin{remark}
Corollaries $\ref{cor.1}$ and $\ref{cor.2}$  with $N=1$ give us the corresponding result of Nadezhkina and Takahashi in \cite[Theorem 4.1]{NT2006}.
\end{remark}
\section*{\bf Acknowledgments} The author wishes to thank the referees for their valuable comments and suggestions which improved the paper. The author also thanks Prof.Dsc Pham Ky Anh, Department of Mathematics, Hanoi University of Science, VNU for his useful hints to complete this work.


\begin{thebibliography}{20}
\bibitem{A2011} P.N. Anh, A hybrid extragradient method extended to fixed point problems and equilibrium problems, \textit{
Optimization} \textbf{62} (2013), no. 2, 271--283.
\bibitem{A2013}\bysame, A hybrid extragradient method for pseudomonotone equilibrium
problems and fixed point problems, \textit{Bull. Malays. Math. Sci. Soc.} \textbf{36} (2013), no. 1, 107--116.
\bibitem{ABH2014}P. K. Anh, Ng. Buong and D. V. Hieu, Parallel methods for regularizing systems of equations involving accretive operators, \textit{Appl. Anal.} \textbf{93} (2014), no. 10, 2136--2157.
\bibitem{AH2014b} P. K. Anh, D. V. Hieu, Parallel hybrid methods for variational inequalities, equilibrium problems and common fixed point problems, \textit{Vietnam J. Math.} (2015), DOI:10.1007/s10013-015-0129-z.
\bibitem{AH2014}\bysame, Parallel and sequential hybrid methods for a finite family of asymptotically quasi $\phi$
-nonexpansive mappings, \textit{J. Appl. Math. Comput.} \textbf{48} (2015), 241--263.
\bibitem{BB1996} H.H. Bauschke and J.M. Borwein, On projection algorithms for solving convex feasibility problems, \textit{SIAM Review} \textbf{38} (1996), 367-- 426.
\bibitem{BC2001} H.H. Bauschke and P.L. Combettes, A weak-to-strong convergence principle for Fejer monotone methods in Hilbert spaces, \textit{Math. Oper. Res.} \textbf{26} (2001), 248--264.
\bibitem{BS1996} M. Bianchi, S. Schaible, Generalized monotone bifunctions and equilibrium problems, \textit{J. Optim. Theory Appl.} 
\textbf{90} (1996), 31--43.
\bibitem{BO1994} E. Blum, W. Oettli, From optimization and variational inequalities to equilibrium problems, \textit{Math. Student} \textbf{63} (1994), 123--146.
\bibitem{CCCDH2011}Y. Censor, W. Chen, P. L. Combettes, R. Davidi, G. T. Herman, On the effectiveness of projection methods for convex feasibility problems 
with linear inequality constraints, \textit{Comput. Optim. Appl.} (2011), DOI: 10.1007/s10589-011-9401-7.
\bibitem{CGRS2012}Y. Censor, A. Gibali, S. Reich, S. Sabach, Common solutions to variational inequalities, \textit{Set-Valued Var. Anal.} \textbf{20} (2012), 229--247.
\bibitem{C1996}P. L. Combettes, The convex feasibility problem in image recovery. in, \textit{P.Hawkes(Ed.), Advances in Imaging and Electron Physics, Academic Press, New York}, \textbf{95} (1996), 155--270.
\bibitem{DD2012}A. Dhara, J. Dutta, \textit{Optimality conditions in convex optimization: A finite - dimensional view}, CRC Press, Taylor \& Francis, 2012 
\bibitem{DHM2014} B.V. Dinh, P.G. Hung, L.D. Muu, Bilevel optimization as a regularization approach to pseudomonotone equilibrium problems, \textit{Numer. Funct. Anal.  Optim.} \textbf{35} (2014), no. 5, 539--563.
\bibitem{H2015} D. V. Hieu, A parallel hybrid method for equilibrium problems, variational inequalities and nonexpansive mappings in Hilbert space, \textit{J. Korean Math. Soc.} \textbf{52} (2015), 373--388.
\bibitem{K1976}G. M. Korpelevich, The extragradient method for finding saddle points and other problems, \textit{Ekonomikai Matematicheskie Metody} \textbf{12} (1976), 747-756.
\bibitem{LL1988}H. C. Lai, L. J. Lin, Moreau-Rockafellar type theorem for convex set functions, \textit{J. Math. Anal. Appl.} \textbf{132} (1988), no 2, 558--571.
\bibitem{M2000}G. Mastroeni, On auxiliary principle for equilibrium problems, \textit{Publ. Dipart. Math. Univ. Pisa} \textbf{3} (2000), 1244--1258.
\bibitem{MO1992} L.D. Muu and W. Oettli, Convergence of an adative penalty scheme for finding constrained equilibria, \textit{Nonlinear Anal. TMA} \textbf{18} (1992), no. 12, 1159--1166.
\bibitem{NT2006} N. Nadezhkina and W. Takahashi, Strong convergence theorem by a hybrid method for nonexpansive mappings and Lipschitz-continuous monotone
mappings, \textit{SIAM J. Optim.} \textbf{16} (2006), 1230--1241.
\bibitem{QMH2008}T.D. Quoc, L.D. Muu and N.V. Hien, Extragradient algorithms extended to equilibrium problems, \textit{Optimization} \textbf{57} (2008), 749--776.
\bibitem{R1970}R.T. Rockafellar, \textit{Convex analysis}, Princeton, NJ: Princeton University Press, 1970.
\bibitem{SS2000}M. V. Solodov and B. F. Svaiter, Forcing strong convergence of proximal point iterations in Hilbert space, \textit{Math. Program.} \textbf{87} (2000), 189--202.
\bibitem{S1987} H .Stark (Ed.), \textit{Image recovery theory and applications, Academic Press}, Orlando, 1987.
\bibitem{TT2007}S. Takahashi, W. Takahashi, Viscosity approximation methods for equilibrium problems and fixed point in Hilbert space, \textit{J. Math. Anal. Appl.} \textbf{331} (2007), no. 1, 506--515.
\end{thebibliography}
\end{document}